\newtheorem{theorem}{Theorem}[section]
\newtheorem{corollary}[theorem]{Corollary}
\newtheorem*{remark}{Remark}
\newtheorem*{remarks}{Remarks}
\Crefname{conjecture}{Conjecture}{Conjectures}
\numberwithin{figure}{section}
\theoremstyle{plain}
\theoremstyle{plain}
\newcommand{\N}{\mathbb{N}}
\newcommand{\R}{\mathbb{R}}
\numberwithin{equation}{section}
\author{Larry Rolen}
\address{212 McAllister Building
The Pennsylvania State University
University Park, PA 16802} 
\email{larryrolen@psu.edu}
\title{On $t$-core towers and $t$-defects of partitions}
\begin{document}

\bibliographystyle{amsplain}
\maketitle
\begin{abstract}
We study generating functions which count the sizes of $t$-cores of partitions, and, more generally, the sizes of higher rows in $t$-core towers. We then use these results to derive an asymptotic for the average size of the $t$-defect of partitions, as well as some curious congruences.
\end{abstract}
\section{Introduction and Statement of results}

The study of generating functions formed by sums or products over sets of partitions has a long and beautiful history going back to Euler and encompassing many arithmetic, analytic, and modularity results (see \cite{AndrewsBook,BacherManivel,BlochOkounkov,Fine,Han,MacDonald,NekrasovOkounkov,Robert,Zagier} and the references therein for just a few examples related to the generating functions discussed here). Here we consider generating functions which enumerate statistics associated to the so-called $t$-core towers of partitions. To describe this, we denote by $\mathcal P$ the set of all integer partitions (including the empty partition $\emptyset$), and for a generic partition $\lambda$, we denote by $|\lambda|$ the size of $\lambda$, i.e., the sum of the parts of $\lambda$ (with $|\emptyset|=0$).
We will also write $\lambda\vdash n$ if $|\lambda|=n$. 
We then consider the generating functions

\[
T_{j,t}(q):=\sum_{\lambda\in\mathcal P}|\beta_j(t;\lambda)|q^{|\lambda|}
,
\]
where $\beta_j(t;\lambda)$ is the $j$-th row of the $t$-core tower of $\lambda$ (defined in Section \ref{CoreTowerPrelim}), and where for any set of partitions $X$, we define $|X|$ as the sum of sizes of all partitions in $X$. In particular, $T_{0,t}(q)$ is the generating function which weights partitions by the size of their $t$-cores. 

Our main result is then as follows, where the $q$-series $G_2^0$ is defined by 
\[
G_2^0(q)
:=
\sum_{n\geq1}\frac{nq^n}{1-q^n}
=
\sum_{n\geq1}\sigma_1(n)q^n
\]
with $\sigma_1(n):=\sum_{d|n}d$, 
and where
\[
(a)_n=(a;q)_n:=\prod_{j=0}^{n-1}\left(1-aq^j\right)
.
\]
\begin{theorem}
\label{mainthm}
For all $t\geq2$, $j\geq0$, we have that
\[
T_{j,t}(q)
=
\frac{t^{j}G_2^0\left(q^{t^j}\right)-t^{j+2}G_2^0\left(q^{t^{j+1}}\right)}{(q)_{\infty}}
.
\]
\end{theorem}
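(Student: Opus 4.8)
\noindent\emph{Proof plan.}
The plan is to exploit the multiplicative structure of the $t$-core tower. Recall from Section~\ref{CoreTowerPrelim} that the assignment $\lambda\mapsto(\beta_0(t;\lambda),\beta_1(t;\lambda),\dots)$ is a bijection from $\mathcal P$ onto the set of sequences $(\beta_0,\beta_1,\dots)$ in which $\beta_k$ is a $t^k$-tuple of $t$-cores and $\beta_k$ is the empty tuple for all sufficiently large $k$, and that under this bijection $|\lambda|=\sum_{k\ge0}t^k|\beta_k|$. Write $C(q):=\sum_{\mu}q^{|\mu|}$ with the sum over all $t$-cores $\mu$; I will use the classical product formula $C(q)=(q^t;q^t)_\infty^t/(q;q)_\infty$ at the end. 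Since a $t^k$-tuple of $t$-cores is simply a free choice of $t^k$ $t$-cores, and since the weight $|\beta_j(t;\lambda)|$ appearing in $T_{j,t}$ depends only on the $j$-th row, the bijection together with the size formula factors the generating function across the rows: each row $k\ne j$ contributes the unweighted factor $C(q^{t^k})^{t^k}$, and row $j$ contributes the size-weighted factor $\sum_{(\mu_1,\dots,\mu_{t^j})}\bigl(\sum_i|\mu_i|\bigr)\,q^{t^j\sum_i|\mu_i|}$, where the sum runs over $t^j$-tuples of $t$-cores.

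Next I would simplify the two pieces. By symmetry in the $t^j$ coordinates, the weighted row-$j$ factor equals $t^j\,C(q^{t^j})^{t^j-1}\cdot(x\,C'(x))\big|_{x=q^{t^j}}$, since $\sum_{\mu}|\mu|x^{|\mu|}=x\,C'(x)$. Multiplying by the factors $C(q^{t^k})^{t^k}$ for $k\ne j$ and recognizing the full unweighted product $\prod_{k\ge0}C(q^{t^k})^{t^k}$ as $\sum_{\lambda\in\mathcal P}q^{|\lambda|}=1/(q)_\infty$ --- again by the bijection, now with no weighting, which is the classical $t$-core tower product identity --- I obtain
\[
T_{j,t}(q)=\frac{t^j}{(q)_\infty}\cdot\frac{x\,C'(x)}{C(x)}\bigg|_{x=q^{t^j}}=\frac{t^j}{(q)_\infty}\,\Bigl(x\tfrac{\d}{\d x}\log C(x)\Bigr)\bigg|_{x=q^{t^j}}.
\]
Then from $C(x)=(x^t;x^t)_\infty^t/(x;x)_\infty$ a short computation gives
\[
x\tfrac{\d}{\d x}\log C(x)=\sum_{n\ge1}\frac{nx^n}{1-x^n}-t^2\sum_{n\ge1}\frac{nx^{tn}}{1-x^{tn}}=G_2^0(x)-t^2\,G_2^0(x^t),
\]
and substituting $x=q^{t^j}$ yields exactly $\bigl(t^jG_2^0(q^{t^j})-t^{j+2}G_2^0(q^{t^{j+1}})\bigr)/(q)_\infty$.

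The step that needs genuine care is the factorization of $T_{j,t}$ across the rows of the tower: it rests on the bijectivity of the $t$-core tower construction and on the additivity $|\lambda|=\sum_k t^k|\beta_k|$, together with the routine but slightly fiddly bookkeeping that summing a coordinatewise-additive statistic over a product of $t^j$ copies of the same set produces $t^j$ times the ``one weighted coordinate, the rest free'' contribution. Once $T_{j,t}(q)$ is in the form $\tfrac{t^j}{(q)_\infty}\,x\tfrac{\d}{\d x}\log C(x)\big|_{x=q^{t^j}}$, the rest is a direct calculation with the product formula for $C$.
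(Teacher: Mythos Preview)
Your proposal is correct, and it is built on the same core ingredients as the paper's proof---the $t$-core tower bijection, the size formula $|\lambda|=\sum_{k\ge0}t^k|\beta_k|$, and the logarithmic derivative of an infinite product---but you organize them differently. The paper truncates the tower at level $j$, keeping the ``remainder'' row $\alpha_{j+1}$ as a $t^{j+1}$-tuple of \emph{arbitrary} partitions; from this it first obtains the generating function for generalized $(j,t)$-cores (Corollary~\ref{GenTCorePartitionsCor}), then applies $q\tfrac{d}{dq}$ to compute the cumulative quantity $\sum_{k=0}^{j}t^kT_{k,t}(q)$, and finally extracts $T_{j,t}$ by induction on $j$ via a telescoping sum. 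You instead use the full infinite tower at once, noting that $\prod_{k\ge0}C(q^{t^k})^{t^k}$ telescopes to $1/(q)_\infty$, and isolate row $j$ directly by symmetry, so no induction is needed. Your route is shorter and arguably cleaner for the theorem itself; the paper's route has the advantage that it produces Corollary~\ref{GenTCorePartitionsCor} as an intermediate step. The one external input you invoke that the paper does not is the classical product formula $C(q)=(q^t;q^t)_\infty^t/(q)_\infty$ for the $t$-core generating function, but this is standard (and is the $j=0$ case of Corollary~\ref{GenTCorePartitionsCor}).
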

\begin{remark}
It is interesting to note that the $j=0$ case of Theorem \ref{mainthm} is related to Theorem 1.1 of \cite{BacherManivel}, and can indeed be deduced from that result after combining with Proposition 3.6 of \cite{Olsson}. 
\end{remark}

In particular, a number of authors have been interested in the positivity and asymtptoics of the number of $t$-cores of size $n$ (cf. e.g. \cite{Anderson,BerkovichGarvan,KenGranville, HanOno,HanusaNath,Kim,KimRouse,Ono}), which has important implications for the representation theory of the symmetric group. Specificallly, the $t$-cores of size $n$ correspond to $t$-defect-zero blocks of the corresponding irreducible representations of $S_n$ \cite{JamesKerber}. Given the importance of these results, which establish the existence and approximate number of defect-zero blocks for $S_n$, it is natural to study the general growth properties of the $t$-defects of partitions, which are denoted by $d_t(\lambda)$ and defined in \eqref{DefectDefn}.

We then show the following, where
\[
D_t(q):=\sum_{\lambda\in\mathcal P}d_t(\lambda)q^{|\lambda|}
.
\]
\begin{corollary}\label{DefectCorGenFn}
For all $t\geq2$, we have
\[
D_t(q)=\frac{\sum_{j\geq1}t^jG_2^0\left(q^{t^j}\right)}{(q)_{\infty}}
.
\]
\end{corollary}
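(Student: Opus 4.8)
The plan is to write $D_t(q)$ as an explicit linear combination of the generating functions $T_{j,t}(q)$ and then quote Theorem \ref{mainthm}. Unwinding the definition \eqref{DefectDefn} of the $t$-defect together with the standard identity $|\lambda| = \sum_{j\ge 0} t^{j}|\beta_j(t;\lambda)|$ for the $t$-core tower, one gets
\[
d_t(\lambda) \;=\; \frac{|\lambda| - \sum_{j\ge 0}|\beta_j(t;\lambda)|}{t-1} \;=\; \sum_{j\ge 1}\frac{t^{j}-1}{t-1}\,|\beta_j(t;\lambda)| \;=\; \sum_{j\ge 1}\bigl(1 + t + \cdots + t^{j-1}\bigr)|\beta_j(t;\lambda)|.
\]
Multiplying by $q^{|\lambda|}$ and summing over $\lambda\in\mathcal P$ — the interchange of the two sums being harmless since, for each fixed $n$, only the finitely many indices $j$ with $t^{j}\le n$ contribute to the coefficient of $q^{n}$ — yields
\[
D_t(q) \;=\; \sum_{j\ge 1}\frac{t^{j}-1}{t-1}\,T_{j,t}(q).
\]

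Next I would substitute the formula of Theorem \ref{mainthm} for each $T_{j,t}(q)$ and pull out the common factor $1/(q)_{\infty}$, reducing the claim to the identity of formal $q$-series
\[
\sum_{j\ge 1}\frac{t^{j}-1}{t-1}\Bigl(t^{j}G_2^0\!\left(q^{t^{j}}\right) - t^{j+2}G_2^0\!\left(q^{t^{j+1}}\right)\Bigr) \;=\; \sum_{j\ge 1}t^{j}G_2^0\!\left(q^{t^{j}}\right).
\]
This is a pure reindexing computation: split the left-hand side into two sums, shift $j\mapsto j-1$ in the second, and collect the coefficient of $G_2^0(q^{t^{k}})$, which equals $t$ for $k=1$ and $\tfrac{t^{k}-1}{t-1}t^{k} - \tfrac{t^{k-1}-1}{t-1}t^{k+1}$ for $k\ge 2$; a one-line simplification shows the latter collapses to $t^{k}$. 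Comparing coefficients of each $G_2^0(q^{t^{k}})$ then gives the identity, and hence the Corollary.

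The only point needing any care is treating the above manipulations rigorously, since the sums over $j$ are infinite. I would handle this either by observing that $G_2^0(q^{t^{j}})$ is divisible by $q^{t^{j}}$, so only finitely many terms affect any prescribed coefficient of $q^{n}$, or by carrying out all the algebra at the level of coefficients of $q^{n}$ from the outset. Beyond this bookkeeping, the proof uses nothing except Theorem \ref{mainthm} and the defining size identity for the $t$-core tower, so no genuine obstacle arises.
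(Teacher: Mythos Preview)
Your proof is correct and follows essentially the same route as the paper: unwind the definition \eqref{DefectDefn}, invoke Theorem~\ref{mainthm} term by term, and telescope. The only cosmetic difference is that the paper keeps $\sum_{\lambda}|\lambda|q^{|\lambda|}$ intact and evaluates it as $G_2^0(q)/(q)_{\infty}$ via \eqref{GenFunctionG2EtanPn}, whereas you first substitute the tower size identity $|\lambda|=\sum_{j\ge0}t^j|\beta_j(t;\lambda)|$ to write $D_t$ directly as $\sum_{j\ge1}\frac{t^j-1}{t-1}T_{j,t}$; the subsequent telescoping is then the same computation in slightly different packaging.
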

\begin{remark}
It is curious to note that this generating function is reminiscent of the series in Fourier-Jacobi expansions of Siegel modular forms (cf. Chapter II.6 of \cite{EichlerZagier}), as was pointed out to the author by Kathrin Bringmann. 
\end{remark}
Using a well-known Tauberian theorem of Ingham (described in Section \ref{TauberianPrelim}), we also deduce the following.
\begin{corollary}\label{CorAsymp}
For any $t\geq2$, the average size of the $t$-defect of $\lambda$ among the partitions of $n$ is asymptotic to $n/(t-1)$.
\end{corollary}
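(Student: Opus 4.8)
The plan is to translate the statement into an asymptotic for a ratio of two coefficient sequences and then apply Ingham's Tauberian theorem twice. Write $e_t(n):=[q^n]D_t(q)=\sum_{\lambda\vdash n}d_t(\lambda)$ and $p(n):=[q^n]\,(q)_{\infty}^{-1}$, so that the quantity to be estimated is exactly the ratio $e_t(n)/p(n)$; it then suffices to produce matching asymptotics for $e_t(n)$ and for $p(n)$ and to divide.

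First I would analyze the behaviour of the relevant series near $q=1$. Setting $q=e^{-s}$ with $s\to0^{+}$, the modular transformation of the Dedekind eta function gives the classical estimate $(q)_{\infty}^{-1}\sim\sqrt{s/(2\pi)}\,\exp\!\big(\pi^{2}/(6s)\big)$, while the quasimodularity of $E_2=1-24\,G_2^0$ (or a direct Euler--Maclaurin computation) gives $G_2^0(e^{-u})\sim\pi^{2}/(6u^{2})$ as $u\to0^{+}$, together with the crude global bound $0\le G_2^0(e^{-u})\ll u^{-2}$ valid for all $u>0$. Feeding $u=t^{j}s$ into Corollary~\ref{DefectCorGenFn} and using the global bound to dominate the tail, a routine splitting of the sum over $j$ around $j\approx\log_t(1/s)$ should give $\sum_{j\ge1}t^{j}G_2^0(q^{t^j})\sim\frac{\pi^{2}}{6s^{2}}\sum_{j\ge1}t^{-j}=\frac{\pi^{2}}{6(t-1)s^{2}}$, whence $D_t(e^{-s})\sim\frac{\pi^{2}}{6(t-1)\sqrt{2\pi}}\,s^{-3/2}\exp\!\big(\pi^{2}/(6s)\big)$ as $s\to0^{+}$.

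Next I would pass to coefficient asymptotics via Ingham's theorem, whose hypotheses require non-negativity and weak monotonicity of the coefficient sequence. Non-negativity is immediate; for weak monotonicity of $e_t(n)$ I would use the factorization $D_t(q)=(q)_{\infty}^{-1}\cdot N_t(q)$ with $N_t(q):=\sum_{j\ge1}t^{j}G_2^0(q^{t^j})$, observing that $N_t$ has non-negative coefficients and that $p(n)$ is weakly increasing, so the convolution $e_t(n)=\sum_{m}[q^{m}]N_t(q)\cdot p(n-m)$ is weakly increasing as well. Ingham's theorem, applied in the form ``$f(e^{-s})\sim\lambda s^{\beta}e^{A/s}\ \Rightarrow\ [q^{n}]f\sim\frac{\lambda}{2\sqrt{\pi}}A^{\beta/2+1/4}n^{-\beta/2-3/4}e^{2\sqrt{An}}$'', then yields on the one hand the Hardy--Ramanujan asymptotic $p(n)\sim\frac{1}{4\sqrt{3}\,n}e^{\pi\sqrt{2n/3}}$ (here $A=\pi^{2}/6$, $\lambda=1/\sqrt{2\pi}$, $\beta=1/2$) and on the other hand $e_t(n)\sim\frac{1}{4\sqrt{3}\,(t-1)}e^{\pi\sqrt{2n/3}}$ (same $A$, with $\lambda=\frac{\pi^{2}}{6(t-1)\sqrt{2\pi}}$, $\beta=-3/2$). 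Dividing these two asymptotics, the exponential factors $e^{2\sqrt{An}}$ and all the constants cancel and one is left with $e_t(n)/p(n)\sim n/(t-1)$, which is the assertion.

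The step I expect to be the main obstacle is the asymptotic evaluation in the second paragraph: making the passage $\sum_{j\ge1}t^{j}G_2^0(q^{t^j})\sim\frac{\pi^{2}}{6(t-1)s^{2}}$ fully rigorous requires controlling the sum over $j$ uniformly as $s\to0^{+}$ (the terms with $t^{j}s$ bounded produce the main term $\frac{\pi^{2}}{6(t-1)s^{2}}$, while those with $t^{j}s$ large must be shown to contribute a lower-order error), together with careful bookkeeping of the constants so that they genuinely cancel in the ratio. By contrast, once monotonicity of $e_t(n)$ has been noted, the Tauberian step is essentially a black-box application of the theorem recalled in Section~\ref{TauberianPrelim}.
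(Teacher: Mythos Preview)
Your proposal is correct and follows essentially the same route as the paper: derive the asymptotic $D_t(e^{-s})\sim\frac{\pi^{3/2}}{6\sqrt{2}(t-1)}s^{-3/2}e^{\pi^2/(6s)}$ from Corollary~\ref{DefectCorGenFn} via the quasimodular transformation of $E_2$, verify monotonicity of the coefficients, apply Ingham's theorem, and compare with the Hardy--Ramanujan asymptotic for $p(n)$. The only notable variation is your monotonicity argument, which factors $D_t(q)=(q)_\infty^{-1}\cdot N_t(q)$ with $N_t$ having non-negative coefficients; this is slightly slicker than the paper's argument, which instead shows each $G_2^0(q^m)/(q)_\infty$ has monotone coefficients by rewriting it as $\bigl(\sum np(n)q^{mn}\bigr)\cdot(q^m;q^m)_\infty/(q)_\infty$.
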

\begin{remark}
We have here chosen to highlight this particular asymptotic result due to its representation theoretic and historic interest. However, many further asymptotic results can be obtained by studying the generating functions in Theorem \ref{mainthm}. For example, as noted in the remark following Theorem \ref{mainthm}, for $j=0$ there is a connection via Proposition 3.6 of \cite{Olsson} to hook lengths of partitions. Using this idea, and a similar idea involving inserting Dirichlet characters into the generating functions of Theorem 1.1 of \cite{BacherManivel}, amusing results can be proven, such as that the residue classes modulo $m$ of hook lengths are on average equidistributed for any $m$. We leave the details to the interested reader.
\end{remark}
We also consider what we shall call {\it generalized }$(j,t)${\it -cores} (cf. Section \ref{CoreTowerPrelim}). The following then occurs as an intermediate step in the proof of Theorem \ref{mainthm}, where $c_{j,t}(n)$ is the number of generalized $(j,t)$-core partitions of size $n$.
\begin{corollary}\label{GenTCorePartitionsCor}
For any $t\geq2$, $j\geq0$, we have
\[
\sum_{n\geq0}c_{j,t}(n)q^n
=
\frac{\left(q^{t^{j+1}};q^{t^{j+1}}\right)_{\infty}^{t^{j+1}}}{(q)_{\infty}}
.
\]
\end{corollary}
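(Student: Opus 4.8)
The plan is to strip the $t$-core tower one row at a time, which turns the asserted identity into a short induction on $j$. Recall from Section~\ref{CoreTowerPrelim} that a generalized $(j,t)$-core is a partition $\lambda$ whose $t$-core tower is concentrated in its first $j+1$ rows, i.e. $\beta_i(t;\lambda)=\emptyset$ for every $i>j$. In particular the generalized $(0,t)$-cores are precisely the ordinary $t$-cores, so $\sum_{n\ge0}c_{0,t}(n)q^{n}$ is the classical $t$-core generating function $\left(q^{t};q^{t}\right)_\infty^{t}/(q)_\infty$; to stay self-contained one may note that this last identity is immediate from the $t$-quotient bijection $\mathcal P\leftrightarrow\{t\text{-cores}\}\times\mathcal P^{t}$, under which $|\lambda|=|\beta_0(t;\lambda)|+t\sum_{r=0}^{t-1}|\lambda^{(r)}|$, together with $\sum_{\lambda\in\mathcal P}q^{|\lambda|}=1/(q)_\infty$.

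Next I would check that this same $t$-quotient bijection respects the relevant truncation. If $\lambda$ has $t$-core $\beta_0(t;\lambda)$ and $t$-quotient $(\lambda^{(0)},\dots,\lambda^{(t-1)})$, then by the recursive definition of the tower each row $\beta_i(t;\lambda)$ with $i\ge1$ is assembled from the rows $\beta_{i-1}(t;\lambda^{(r)})$, $0\le r\le t-1$; hence $\lambda$ is a generalized $(j,t)$-core if and only if every $\lambda^{(r)}$ is a generalized $(j-1,t)$-core. Combined with the weight relation $|\lambda|=|\beta_0(t;\lambda)|+t\sum_r|\lambda^{(r)}|$, this yields, for all $j\ge1$, the recursion
\[
\sum_{n\ge0}c_{j,t}(n)q^{n}=\frac{\left(q^{t};q^{t}\right)_\infty^{t}}{(q)_\infty}\Bigl(\sum_{n\ge0}c_{j-1,t}(n)q^{tn}\Bigr)^{t}.
\]

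The induction then closes immediately: assuming $\sum_{n\ge0}c_{j-1,t}(n)q^{n}=\left(q^{t^{j}};q^{t^{j}}\right)_\infty^{t^{j}}/(q)_\infty$, substituting $q\mapsto q^{t}$ and raising to the $t$-th power gives $\left(q^{t^{j+1}};q^{t^{j+1}}\right)_\infty^{t^{j+1}}/\left(q^{t};q^{t}\right)_\infty^{t}$, and inserting this into the recursion cancels the factor $\left(q^{t};q^{t}\right)_\infty^{t}$ and leaves $\left(q^{t^{j+1}};q^{t^{j+1}}\right)_\infty^{t^{j+1}}/(q)_\infty$, as claimed. Alternatively one can skip the induction: the tower decomposition gives directly $\sum_{n\ge0}c_{j,t}(n)q^{n}=\prod_{i=0}^{j}\bigl(C_t(q^{t^{i}})\bigr)^{t^{i}}$, where $C_t(q)=\left(q^{t};q^{t}\right)_\infty^{t}/(q)_\infty$ is the $t$-core generating function, and this product telescopes to the same answer.

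I do not expect a genuine obstacle here, since the entire argument lives at the level of formal power series; the only point demanding care is the combinatorial bookkeeping that should already be set up in Section~\ref{CoreTowerPrelim}, namely that the $t$-core tower is a bijection onto finitely supported arrays of $t$-cores with additive weight $|\lambda|=\sum_{i\ge0}t^{i}|\beta_i(t;\lambda)|$, and that truncating the tower corresponds correctly to iterating the $t$-quotient map. This same row-peeling recursion is, presumably, exactly what is reused in the proof of Theorem~\ref{mainthm}.
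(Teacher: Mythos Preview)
Your proof is correct, and both of your routes are essentially the paper's argument. In the paper the corollary is simply the identity \eqref{gencoregenfunction} established inside the proof of Theorem~\ref{mainthm}: there one decomposes an arbitrary partition at once into $(\beta_0,\ldots,\beta_j,\alpha_{j+1})$ via iterated Theorem~\ref{TQuotientTheorem}, observes that the generalized $(j,t)$-cores are exactly the $\sim_j$-class representatives, and reads off $\sum_n c_{j,t}(n)q^n=(q^{t^{j+1}};q^{t^{j+1}})_\infty^{t^{j+1}}/(q)_\infty$ from $1/(q)_\infty$ divided by the $\alpha_{j+1}$-factor. Your ``alternative'' telescoping product $\prod_{i=0}^j C_t(q^{t^i})^{t^i}$ is precisely this one-shot decomposition written row by row, and your primary inductive argument is the same decomposition unrolled one layer at a time; there is no substantive difference.
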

\begin{remark}
Since generalized $(0,t)$-cores are the same as $t$-cores, we note that Corollary \ref{GenTCorePartitionsCor} implies the well-known and heavily used generating function for the number of $t$-cores.
\end{remark}

Much study has been devoted to congruences of generating functions of $t$-cores and relations to congruences of the ordinary partition function, just a few of the many results can be found in \cite{Boylan,Chen,GarvanMore,GarvanKimStanton,KenGranville,HirschhornSellers,KolitschSellers,RaduSellers}.
Theorem \ref{mainthm} also implies several interesting congruences. 
In particular, highlighting the case of $j=0$, we denote 
\[
a_{t}(n):=\sum_{\lambda\vdash n}|\beta_0(t;\lambda)|
,
\]
which is simply the sum of the sizes of all $t$-cores of partitions of $n$. In particular, we have the following, where $p(n)$ denotes the number of partitions of $n$.
\begin{corollary}\label{CongCor}
For all $t,n$, we have
\[
a_{t}(n)\equiv np(n)\pmod{t^2}
.
\]
In particular, we have that
\[
a_{t}(tn)\equiv0\pmod{t^2}
.
\]
\end{corollary}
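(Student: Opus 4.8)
The plan is to specialise Theorem~\ref{mainthm} to $j=0$ and reduce modulo $t^2$. Since $a_t(n)$ is by definition the coefficient of $q^n$ in $T_{0,t}(q)$, Theorem~\ref{mainthm} gives
\[
\sum_{n\ge0}a_t(n)q^n=\frac{G_2^0(q)-t^2G_2^0\!\left(q^t\right)}{(q)_\infty}.
\]
Both $G_2^0(q)=\sum_{n\ge1}\sigma_1(n)q^n$ and $1/(q)_\infty=\sum_{n\ge0}p(n)q^n$ have rational-integer coefficients, so every coefficient of $t^2G_2^0(q^t)/(q)_\infty$ lies in $t^2\Z$. Discarding that term,
\[
\sum_{n\ge0}a_t(n)q^n\equiv\frac{G_2^0(q)}{(q)_\infty}\pmod{t^2}.
\]

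Next I would identify the right-hand side with $\sum_{n\ge0}np(n)q^n$. Applying the Euler operator $q\frac{d}{dq}$ to $1/(q)_\infty=\sum_{n\ge0}p(n)q^n$ produces $\sum_{n\ge0}np(n)q^n$, while carrying out the same differentiation through the logarithmic derivative of $(q)_\infty=\prod_{m\ge1}(1-q^m)$ gives
\[
q\frac{d}{dq}\frac1{(q)_\infty}=\frac1{(q)_\infty}\sum_{m\ge1}\frac{mq^m}{1-q^m}=\frac{G_2^0(q)}{(q)_\infty}.
\]
Equating the two expressions and comparing the coefficient of $q^n$ with the congruence above yields $a_t(n)\equiv np(n)\pmod{t^2}$ for all $t\ge2$ and $n\ge0$. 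The second assertion then follows by specialising the first: replacing $n$ by $tn$ gives $a_t(tn)\equiv (tn)\,p(tn)\pmod{t^2}$, and the right-hand side is divisible by $t^2$, so $a_t(tn)\equiv0\pmod{t^2}$.

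I do not anticipate any genuine obstacle here: the arithmetic content is already packaged into the shape of Theorem~\ref{mainthm}, whose numerator differs from $G_2^0(q)$ only by the manifestly $t^2$-divisible term $t^2G_2^0(q^t)$. The only points deserving a line of care are that this divisibility holds coefficient-by-coefficient (immediate from integrality of $\sigma_1$ and $p$) and the classical identity $q\frac{d}{dq}\bigl(1/(q)_\infty\bigr)=\sum_{n}np(n)q^n$, which is the one external input beyond Theorem~\ref{mainthm} itself.
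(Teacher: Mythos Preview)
Your argument for the first congruence is correct and is exactly the paper's approach: specialise Theorem~\ref{mainthm} to $j=0$, discard the term $t^2G_2^0(q^t)/(q)_\infty$ modulo $t^2$, and identify $G_2^0(q)/(q)_\infty$ with $\sum_n np(n)q^n$ via the logarithmic derivative of $(q)_\infty$ (this is precisely the identity \eqref{GenFunctionG2EtanPn} the paper invokes).

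The deduction of the second congruence, however, contains a real error. You write that ``the right-hand side is divisible by $t^2$,'' i.e.\ that $(tn)\,p(tn)\in t^2\Z$, which would require $n\,p(tn)\equiv 0\pmod t$. This is false in general: for $t=2$, $n=3$ one has $n\,p(tn)=3\,p(6)=33$, which is odd. In fact the first congruence then gives $a_2(6)\equiv 66\equiv 2\pmod 4$, and a direct check confirms $a_2(6)=6$ (among the eleven partitions of $6$ only $(3,2,1)$ has nonempty $2$-core, of size $6$). Similarly $a_3(6)=12\not\equiv 0\pmod 9$. So the ``in particular'' assertion does not follow from the first congruence by the substitution you propose, and indeed it is false as stated; the paper's one-line proof does not supply an argument for it either. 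What is true (and obvious, as the paper's remark notes) is only $a_t(tn)\equiv 0\pmod t$.
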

\begin{remarks}
\begin{enumerate}
\item
The congruence
\[a_{t}(tn)\equiv0\pmod t\]
is obvious, since the $t$-core of each partition of size $tn$ is clearly of size divisible by $t$. The congruence modulo $t^2$, however, only holds when one sums over all partitions of $tn$.
\item
Further congruences can be deduced by using known congruences for $p(n)$.
\item  Note that the numerator of $T_{0,t}$ is a $p$-adic modular form, and more particularly that $T_{0,t}$ is congruent modulo any power of a prime $p^a$ (with $p\geq5$) to a weight $3/2$, level $tp^a$ weakly holomorphic modular form (this follows directly from the discussion of the completed weight $2$ Eisenstein series $E_2^*$ in Section \ref{EisensteinPrelim}). Thus, many congruences can be expected to hold beyond those in Corollary \ref{CongCor}
\item James Sellers has pointed out numerous further congruences to the author. For example, it is easy to check that $T_{0,t}$ is the product of the series $\sum_{n\geq1}nc_{0,t}(n)q^n$ and a $q$-series supported on coefficients of order divisible by $p$ (see also the proof of Theorem \ref{mainthm}). Then for each prime $5\leq p\leq23$, there are congruences of the form $a_{p}(pn+a)\equiv0\pmod \ell$ for any prime $\ell|(p-1)/2$ for certain values of $a$, corresponding to the congruences shown in beautiful work of Garvan on congruences for $c_{0,t}(n)$ \cite{Garvan}. We leave the details to the interested reader, and simply point out that it would be interesting to find a combinatorial ``explanation'' for such congruences.
\end{enumerate}
\end{remarks}

Finally, we point out a simple recursion, which connects the $t$-cores with $t$-regular partitions. Here, we let $p_t(n)$ denotes the number of $t$-regular partitions of $n$, which are the partitions of $n$ with no part divisible by $t$.
\begin{corollary}\label{RegularPartCor}
Assuming the notation above, we have
\[
a_t(n)
=
np(n)-t\sum_{\substack{0\leq j\leq n\\ t|j}}jp\left(\frac jt\right)p_t(n-j),
\]
\end{corollary}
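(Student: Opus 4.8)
The plan is to specialize Theorem \ref{mainthm} to $j=0$ and then rewrite each of the two terms in the numerator combinatorially. Recall the classical logarithmic–derivative identity
\[
q\frac{\d}{\d q}\log\frac{1}{(q)_\infty}
=
\sum_{n\geq1}\frac{nq^n}{1-q^n}
=
G_2^0(q),
\]
which, after multiplying by $1/(q)_\infty$, gives $\tfrac{G_2^0(q)}{(q)_\infty}=q\tfrac{\d}{\d q}\tfrac{1}{(q)_\infty}=\sum_{n\geq0}np(n)q^n$. Thus the first term of $T_{0,t}(q)=\tfrac{G_2^0(q)-t^2G_2^0(q^t)}{(q)_\infty}$ contributes exactly $np(n)$ to the coefficient of $q^n$.

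Next I would handle the second term by writing
\[
\frac{t^2G_2^0(q^t)}{(q)_\infty}
=
t^2\cdot\frac{G_2^0(q^t)}{(q^t;q^t)_\infty}\cdot\frac{(q^t;q^t)_\infty}{(q)_\infty}.
\]
Substituting $q\mapsto q^t$ in the identity of the previous step shows $\tfrac{G_2^0(q^t)}{(q^t;q^t)_\infty}=\sum_{m\geq0}mp(m)q^{tm}$, while $\tfrac{(q^t;q^t)_\infty}{(q)_\infty}=\prod_{t\nmid n}\tfrac{1}{1-q^n}$ is the standard generating function $\sum_{n\geq0}p_t(n)q^n$ for $t$-regular partitions. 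Forming the Cauchy product of these two series and setting $j=tm$ (so that the summation index runs over $0\leq j\leq n$ with $t\mid j$ and $m=j/t$), the coefficient of $q^n$ in $\tfrac{t^2G_2^0(q^t)}{(q)_\infty}$ equals
\[
t^2\sum_{\substack{0\leq j\leq n\\ t\mid j}}\frac{j}{t}\,p\!\left(\frac{j}{t}\right)p_t(n-j)
=
t\sum_{\substack{0\leq j\leq n\\ t\mid j}}j\,p\!\left(\frac{j}{t}\right)p_t(n-j).
\]

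Finally, since $\sum_{n\geq0}a_t(n)q^n=T_{0,t}(q)$ by definition, subtracting the two contributions and comparing coefficients of $q^n$ yields the stated recursion. I do not anticipate a genuine obstacle here: the argument is a short unwinding of Theorem \ref{mainthm} together with the elementary generating functions for $p(n)$, $np(n)$, and $p_t(n)$, and the only point requiring a little care is the re-indexing $j=tm$ in the Cauchy product.
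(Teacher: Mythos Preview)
Your proposal is correct and follows essentially the same route as the paper: specialize Theorem~\ref{mainthm} to $j=0$, use the identity $G_2^0(q)/(q)_\infty=\sum_{n\geq0}np(n)q^n$ (equation~\eqref{GenFunctionG2EtanPn}), and factor $G_2^0(q^t)/(q)_\infty$ as in \eqref{FMConv} together with the $t$-regular generating function~\eqref{RegPartsGenFunction}. Your write-up simply unpacks the one-line proof the paper gives; the re-indexing $j=tm$ is handled correctly.
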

\begin{remark}
It would be interesting to find a combinatorial bijection explaining this identity.

\end{remark}

The paper is organized as follows. In Section \ref{Preliminaries}, we recall the definitions of the $t$-core towers and $t$-defects of partitions, review Ingham's Tauberian Theorem, and give a few useful results on Eisenstein series and their asymptotic expansions. We conclude with proofs of the above results in Section \ref{Proofs}.

\section*{Acknowledgements} 
The author is indebted to James Sellers for his enlightening series of talks on $(s,t)$-cores, out of which the idea for this work arose, as well as for interesting discussions on this work and on congruences for the functions described here in particular. The author is also grateful to Kathrin Bringmann for helpful discussions and comments.

\section{Preliminaries}\label{Preliminaries}
\subsection{$t$-core Towers and $t$-defects}\label{CoreTowerPrelim}

We now define the objects in the statements of Theorem \ref{mainthm} and its corollaries. Wonderful expositions on the combinatorics described here as well as comprehensive examinations of the connections to the representation theory of symmetric groups may be found in \cite{JamesKerber,Olsson}. For a general partition $\lambda=(\lambda_1,\ldots,\lambda_k)$, written with the convention that $\lambda_1\geq\lambda_2\geq\ldots\geq\lambda_k$, we recall that there is an associated \emph{Young diagram} which consists of a grid of {\it cells} with $k$ rows, the $j$-th row of which consists of $\lambda_j$ boxes. Furthermore, for any cell $\mathfrak c$ in $\mathcal Y_{\lambda}$, the {\it hook length} is the number of cells to the right of $\mathfrak c$ plus the number of cells below $\mathfrak c$ plus one. For example, if $\lambda=(5,4,2,2,1)$, then the Young diagram $\mathcal Y_{\lambda}$ is displayed in Figure \ref{YoungDiagramEx}.

\begin{figure}[!htb]\label{YoungDiagramEx}
\caption{The Young diagram $\mathcal Y_{(5,4,2,2,1)}$ with hook lengths shown.}
\[
\young(97431,7521,42,31,1)
\]
\end{figure}

These hook lengths play an important role in representation theory as well as combinatorics. For example, it is well-known that partitions of $n$ are in one-to-one correspondence with complex irreducible representations of the symmetric group $S_n$. The famous Frame-Robinson-Thrall formula then implies that the corresponding dimensions of these representations are expressed in terms of the product of hook lengths of all cells in the corresponding Young diagrams.

For a natural number $t\geq2$, a partition $\lambda$ is  
called a $t$-\emph{core} if no hook lengths in the Young diagram of $\lambda$ are divisible by $t$. For a general partition $\lambda$, and for a fixed $t$, there is an associated $t$-core subpartition which is formed from $\lambda$ by successively deleting hooks of length divisible by $t$ (possibly yielding the empty partition). This is called the $t$-\emph{core} of $\lambda$ and we denote it by $\lambda_{(t)}$. There is also an associated $t${\it -quotient}, denoted by $\lambda^{(t)}$, which is a $t$-tuple of partitions. The uniqueness of $\lambda_{(t)}$ follows from an elegant argument involving $t$-abaci. For this argument, and for more on the connections between partitions, $t$-cores, and representation theory, as well as the definition of $\lambda^{(t)}$, the reader is referred to \cite{JamesKerber}. 

We shall frequently have use of the the fact that partitions are uniquely built up from their $t$-cores and $t$-quotients.
\begin{theorem}[cf. \cite{JamesKerber}, Theorem 2.7.30]\label{TQuotientTheorem}
A partition $\lambda$ is uniquely determined by its $t$-core $\lambda_{(t)}$ and $t$-quotient $\lambda^{(t)}$. Conversely, given a $t$-core $\lambda$ and a $t$-tuple of partitions $(\lambda_1,\ldots,\lambda_t)$, there is a partition $\mu$ with 
\[
\mu_{(t)}=\lambda
\]
and
\[
\mu^{(t)}=(\lambda_1,\ldots,\lambda_t)
.
\]
Moreover, we have
\[
|\lambda|=|\lambda_{(t)}|+t|\lambda^{(t)}|
.
\]
\end{theorem}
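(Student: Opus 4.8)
The plan is to prove Theorem~\ref{TQuotientTheorem} via the theory of $t$-abaci, which is the standard tool tailored to exactly this statement. First I would encode a partition $\lambda=(\lambda_1\geq\cdots\geq\lambda_k)$ by a \emph{beta-set} (equivalently, a set of first-column hook lengths): fix an integer $N\geq k$ with $t\mid N$, pad $\lambda$ with zeros to length $N$, and put $\beta_i:=\lambda_i+N-i$ for $1\leq i\leq N$. The set $B=\{\beta_1,\dots,\beta_N\}$ consists of $N$ distinct non-negative integers, and $\lambda$ is recovered from the pair $(B,N)$ by sorting $B$ decreasingly and subtracting $N-i$ from the $i$-th entry; replacing $N$ by $N+t$ replaces $B$ by $\{0,1,\dots,t-1\}\cup(B+t)$, so nothing below depends on the particular choice of $N$. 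Placing a \emph{bead} at position $\beta$ on a semi-infinite runner $0,1,2,\dots$ for each $\beta\in B$ gives a bijection between partitions (with $N$ fixed) and configurations of $N$ beads.

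Next I would recall the two basic abacus lemmas. First, removing a rim hook of length $t$ from $\lambda$ corresponds exactly to choosing a bead at some position $x$ with $x-t\geq 0$ vacant and sliding it to position $x-t$; conversely every such move realizes the removal of some rim $t$-hook, and the change in $|\lambda|$ is a decrease by $t$. Second, if one displays the abacus on $t$ \emph{runners}, position $x$ lying on runner $x\bmod t$ at level $\lfloor x/t\rfloor$, then the preceding move is simply ``slide a bead up one level on its runner.'' Consequently the $t$-core of $\lambda$ is obtained by pushing every bead as high up its runner as it will go; since the terminal configuration has, on each runner $r$, the beads occupying an initial segment of levels, and the number $a_r$ of beads on runner $r$ is clearly invariant under the moves, the terminal configuration is forced, which gives the well-definedness (uniqueness) of $\lambda_{(t)}$ asserted in the theorem. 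Formally this is a confluence statement for the rewriting system of $t$-hook removals, provable by a diamond lemma.

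Then comes the bijection itself. From the $t$-runner picture, for each residue $r\in\{0,\dots,t-1\}$ read off the positions of the beads on runner $r$; these form a beta-set (with $a_r$ beads) of a partition $\lambda^{[r]}$, and I would \emph{define} the $t$-quotient $\lambda^{(t)}$ to be the resulting $t$-tuple, up to the cyclic relabelling of the runners used in \cite{JamesKerber} --- matching that convention is one of the final bookkeeping points. The construction is manifestly reversible: the pair $(\lambda_{(t)},N)$ determines the numbers $a_r$, and then the tuple $(\lambda^{[0]},\dots,\lambda^{[t-1]})$ tells us exactly where to place the $a_r$ beads on runner $r$, so we recover the global abacus and hence $\lambda$. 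This proves that $\lambda\mapsto(\lambda_{(t)},\lambda^{(t)})$ is a bijection from partitions onto pairs consisting of a $t$-core and a $t$-tuple of partitions, i.e. the first two assertions. For the size formula, note that the sum of the bead positions equals $|\lambda|+\binom N2$, a bead at level $\ell$ on runner $r$ contributing $t\ell+r$; reducing runner $r$ to its core segment takes exactly $|\lambda^{[r]}|$ upward moves, each lowering $|\lambda|$ by $t$, so $|\lambda|=|\lambda_{(t)}|+t\sum_{r}|\lambda^{[r]}|=|\lambda_{(t)}|+t|\lambda^{(t)}|$.

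The real work here is bookkeeping rather than ideas: one must fix the conventions (the choice of $N$ with $t\mid N$; the relabelling of runners that makes the abacus definition of $\lambda^{(t)}$ agree with the recursive hook-removal definition; and the precise reading of a runner's beads as a beta-set) carefully enough that the inverse map is unambiguous and that ``$t$-quotient'' means what it is supposed to mean. Once the dictionary between rim $t$-hooks and upward bead-moves is in place, everything else is a finite verification --- which is precisely the content of \cite[Theorem~2.7.30]{JamesKerber}, and for the conventions I would ultimately defer to that reference.
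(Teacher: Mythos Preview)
Your abacus argument is correct and is exactly the standard proof of this result. Note, however, that the paper does not itself prove Theorem~\ref{TQuotientTheorem}: it is stated in Section~\ref{CoreTowerPrelim} as a preliminary fact cited from \cite[Theorem~2.7.30]{JamesKerber}, with the paper even remarking that ``the uniqueness of $\lambda_{(t)}$ follows from an elegant argument involving $t$-abaci'' and deferring the reader to that reference. So there is no comparison to make --- you have supplied precisely the proof the paper points to.
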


Here, we follow Olsson \cite{Olsson} to further describe the $t$-core towers of partitions. We first introduce an object which we will find convenient, and which we shall call the $t${\it -core pre-tower} of $\lambda$. To define this, we begin by setting the $0$-th row of the $t$-core pre-tower as $\lambda$, and denote this by $\alpha_0(t;\lambda)$ (and by $\alpha_0$ when $t$ and $\lambda$ are clear from context). We then define for any $j$ the $j$-th row $\alpha_j$ of the $t$-core pre-tower by taking in order the $t$-quotients of all partitions in $\alpha_{j-1}$. It is well-known that this process must terminate, i.e., that $\alpha_j=0$ for some $j$. We then define the $t$-core pre-tower as the tower of all rows $\alpha_j$ with $|\alpha_j|\neq0$. Similarly, we define the $t${\it -core tower} of $\lambda$ as the tower whose $j$-th row $\beta_j=\beta_j(t;\lambda)$ is formed by taking the $t$-cores of the partitions in $\alpha_j(t;\lambda)$.
This procedure is illustrated by Figures \ref{PreCoreTowerEx} and \ref{CoreTowerEx}.

\begin{figure}[!htb]\label{PreCoreTowerEx}
\caption{The $2$-core pre-tower of $(5,4,2,2,1)$}
\begin{center}
\begin{tabular}{ c c c c c c c  }
& & & $(5,4,2,2,1)$ & & & \\
& $(1,1)$ & & & &  $(2)$&  \\
$(1)$ &&  $\emptyset$ &&   $\emptyset$&&  $(1)$  \\
\end{tabular}
\end{center}
\end{figure}

\begin{figure}[!htb]
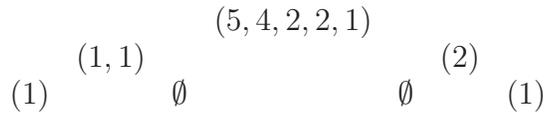
\label{CoreTowerEx}
\caption{The $2$-core tower of $(5,4,2,2,1)$}
\begin{center}
\begin{tabular}{ c c c c c c c  }
&&& $(3,2,1)$ &&& \\ 
& $\emptyset$ &&&& $\emptyset$ & \\
$(1)$ && $\emptyset$ && $\emptyset$ && (1)
\end{tabular}
\end{center}
\end{figure}

An important invariant of the representation of $S_n$ corresponding to a partition $\lambda$ is the so-called $t${\it -core defect}, which by (4) on page 43 of \cite{Olsson} is given for prime $t$ by

\begin{equation}\label{DefectDefn}
d_t(\lambda)=\frac{|\lambda|-\sum_{j\geq0}|\beta_j(t;\lambda)|}{t-1}
.
\end{equation}
We will analogously define $d_t(\lambda)$ by this same formula for general $t\geq2$, as it is still of combinatorial interest, if not of clear representation theoretic importance.
Finally, we will say that a partition $\lambda$ is a {\it generalized }$(j,t)${\it -core} if $\alpha_{j+1}=0$ (this is just a convenient way of saying that $|\beta_{k}|=0$ for all $k\geq j+1$). Note that generalized $(0,t)$-cores are the same as $t$-cores. 

\subsection{Ingham's Tabuerian Theorem}\label{TauberianPrelim}
We now recall the main result on asymptotics of $q$-series which we need to prove Corollary \ref{CorAsymp}. This is known as \emph{Ingham's Tauberian Theorem}, and it allows one to determine explicit asymptotics for the coefficients of a $q$-series with monotonic coefficients whose asymptotic expansion near $q=1$ is known. 
\begin{theorem}[Ingham, special case of Theorem 1 of \cite{Ingham}]\label{InghamTheorem} 
Let $f(q)=\sum_{n\geq0}a_nq^n$ be a power series with non-negative, monotonically increasing coefficients and radius of convergence $1$. Then if there are constants $A>0$, $\ell,\alpha\in\R$ with
\[
f\left(e^{-\varepsilon}\right)\sim \ell\varepsilon^{\alpha}e^{\frac{A}{\varepsilon}}\quad\mathrm{ as }\ \varepsilon\rightarrow0^+
,
\]
then for $n\rightarrow\infty$ we have
\[
a_n\sim\frac{\ell A^{\frac{\alpha}2+\frac14}}{2\sqrt\pi n^{\frac{\alpha}2+\frac34}}e^{2\sqrt{An}}
.
\]
\end{theorem}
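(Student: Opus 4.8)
The statement is a classical Tauberian theorem of Ingham \cite{Ingham}, and the route I would follow to prove it starts from the saddle-point heuristic it makes rigorous, since that heuristic dictates every constant. Writing $f(e^{-\varepsilon})=\sum_{m\ge0}a_m e^{-m\varepsilon}$, one expects $a_n$ to be essentially the portion of this sum concentrated near $m=n$ when $\varepsilon$ is taken at the saddle point $\varepsilon_n:=\sqrt{A/n}$, at which $m\varepsilon+A/\varepsilon$ is minimized over $\varepsilon$ with minimal value $2\sqrt{Am}$. Locally near $m=n$ the quantity $\log\!\big(a_m e^{-m\varepsilon_n}\big)$ behaves like $2\sqrt{Am}-m\varepsilon_n$, whose second derivative is $\approx-\tfrac12\sqrt A\,n^{-3/2}$, so the sum concentrates in a Gaussian window of width $\asymp A^{-1/4}n^{3/4}$; a Laplace evaluation then gives $f(e^{-\varepsilon_n})\approx 2\sqrt{\pi}\,A^{-1/4}n^{3/4}\,a_n e^{-n\varepsilon_n}$, and substituting $f(e^{-\varepsilon_n})\sim\ell\varepsilon_n^{\alpha}e^{A/\varepsilon_n}$ together with $n\varepsilon_n+A/\varepsilon_n=2\sqrt{An}$ reproduces exactly the constant $\ell A^{\alpha/2+1/4}/(2\sqrt\pi)$ and the exponent $n^{\alpha/2+3/4}$. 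The whole task is to turn this heuristic into rigorous two-sided bounds using \emph{only} the monotonicity of $(a_n)$ — in particular with no input about $f$ away from the positive real axis, which is exactly what separates a Tauberian argument from a Mellin--Perron one.

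Concretely, I would first note that the trivial bound $a_n e^{-n\varepsilon}\le f(e^{-\varepsilon})$, optimized at $\varepsilon=\varepsilon_n$, already produces the correct exponential factor $e^{2\sqrt{An}}$ and loses only a sub-exponential amount. For a matching lower bound on $a_n$ I would, for a cutoff $N$ slightly above $n$, split $f(e^{-\varepsilon})=\sum_{m\le N}a_m e^{-m\varepsilon}+\sum_{m>N}a_m e^{-m\varepsilon}$: monotonicity gives $a_m\le a_N$ in the first sum, bounding it by $a_N(1-e^{-(N+1)\varepsilon})/(1-e^{-\varepsilon})$, while the tail satisfies $\sum_{m>N}a_m e^{-m\varepsilon}\le e^{-N(\varepsilon-\varepsilon')}f(e^{-\varepsilon'})$ for any $\varepsilon'<\varepsilon$; choosing $\varepsilon,\varepsilon'$ near $\varepsilon_n$ and $N$ in the appropriate window makes the tail negligible and yields $a_N\gtrsim\varepsilon f(e^{-\varepsilon})$, hence a lower bound for $a_n$ with the right exponential. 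Together these pin $a_n$ down up to sub-exponential factors. To upgrade this to the exact asymptotic, the cleanest route I know is the Karamata-style one: pass from $(a_n)$ to its partial sums $A(x):=\sum_{m\le x}a_m$, which remain non-decreasing and satisfy $\sum_m A(m)e^{-m\varepsilon}=f(e^{-\varepsilon})/(1-e^{-\varepsilon})\sim\ell\varepsilon^{\alpha-1}e^{A/\varepsilon}$ by Abel summation; prove the corresponding Tauberian statement for $A(x)$ — the growth $\varepsilon^{-\gamma}e^{A/\varepsilon}$ becomes amenable to the Karamata sandwich after passing to the variables in which $2\sqrt{Ax}$ and $A/\varepsilon$ are Legendre-dual — and then difference back, using monotonicity of $(a_n)$ a second time, to recover the asymptotic for $a_n$ itself.

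The hard part will be precisely this last step: extracting the \emph{sharp} prefactor $n^{-(\alpha/2+3/4)}$ and the exact constant, rather than just $e^{2\sqrt{An}}$ up to a factor $n^{O(1)}$. The crude monotonicity bounds give the exponential rate almost for free, and a Kohlbecker-type Legendre-transform argument upgrades this to $\log a_n\sim 2\sqrt{An}$; but the extra power $n^{1/4}$ and the $\sqrt\pi$ are encoded in the \emph{width} of the Gaussian concentration and only emerge once the Tauberian sandwich is run with upper and lower windows whose ratio tends to $1$, and making those error terms genuinely $o(1)$ uniformly in $n$ is the technical heart of Ingham's argument. For the application in Corollary \ref{CorAsymp} we need the theorem only as a black box, with $f$ read off from Corollary \ref{DefectCorGenFn} and its behavior as $q\to1^-$ supplied by the Eisenstein-series asymptotics of Section \ref{EisensteinPrelim}.
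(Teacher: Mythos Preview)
The paper does not prove this statement at all: Theorem~\ref{InghamTheorem} is quoted as a special case of Theorem~1 of \cite{Ingham} and is used purely as a black box in the proof of Corollary~\ref{CorAsymp}. There is therefore no ``paper's own proof'' to compare your proposal against.

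Your sketch is a reasonable outline of the saddle-point/Karamata route to Ingham's result, and you correctly identify where the real work lies (extracting the sharp prefactor, not just the exponential rate). But for the purposes of this paper none of that is needed or supplied; you even acknowledge this yourself in your final sentence. If the intent was to reconstruct what the paper does with this theorem, the answer is simply: it cites it and applies it, after verifying monotonicity of the coefficients of $D_t(q)$ via \eqref{FMConv} and computing the asymptotic of $D_t(e^{-\varepsilon})$ from \eqref{PartitionGenFunctionAsymp} and \eqref{G2Trans}.
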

In particular, this directly yields the celebrated Hardy-Ramanujan asymptotic for the partition function. To see this, recall that 
\begin{equation}\label{PartitionGenFn}
\frac1{(q)_{\infty}}=\sum_{n\geq0}p(n)q^n
,
\end{equation}
and that $\eta(q):=q^{\frac{1}{24}}(q)_{\infty}$ is a {\it modular form} of weight $1/2$. From the transformation of $\eta$ under modular inversion, it is easy to check that
\begin{equation}\label{PartitionGenFunctionAsymp}
\frac1{\left(e^{-\varepsilon};e^{-\varepsilon}\right)_{\infty}}
\sim
\frac{\varepsilon^{\frac12}e^{\frac{\pi^2}{6\varepsilon}}}{\sqrt{2\pi}}
,
\end{equation}
from which the Hardy-Ramanujan asymptotic 
\[
p(n)
\sim
\frac1{4n\sqrt 3}e^{\pi\sqrt{\frac{2n}3}}
\]
follows by directly plugging in to Theorem \ref{InghamTheorem} (together with the postitivity and monotonicity of $p(n)$, which are obvious from its combinatorial interpretation).
\subsection{Eisenstein series and asymptotics}\label{EisensteinPrelim}
In order to apply Theorem \ref{InghamTheorem}, we shall use \eqref{PartitionGenFunctionAsymp} together with modularity properties of the series $G_2^0$, which we now review. In fact, the notation $G_2^0$ is chosen to reflect that it is effectively the weight two Eisenstein series, which is denoted $G_2$, without its constant term. We shall rewrite this series in terms of the normalized weight $2$ Eisenstein series, defined by 
\[
E_2(\tau)
:=
1-24\sum_{n\geq1}\sigma_1(n)q^n
,
\]
where $q:=e^{2\pi i \tau}$. We then directly find that
\[
G_2^0(q)=\frac1{24}-\frac{E_2(\tau)}{24}
.
\]
Now, it is well-known that $E_2$ is not a modular form, but that is a nearly-modular object known as a {\it quasimodular form}. In particular, if we denote the imaginary part of $\tau$ by $y$, then we define a simple non-holomorphic modification of $E_2$ by letting
\[
E_2^*(\tau):=E_2(\tau)-\frac3{\pi y}
.
\]
This ``correction'' is then modular of weight $2$, and in particular satisfies 
\begin{equation*}
E_2^*\left(\frac{-1}{\tau}\right)=\tau^2E_2^*(\tau)
.
\end{equation*}
Using this, we directly find the following, where $\mathcal E_m(\tau):=E_2^*(m\tau)$:
\begin{equation}\label{E2StarTrans}
\mathcal E_m\left(\frac{-1}{\tau}\right)=\frac{\tau^2}{m^2}E_2^*\left(\frac{\tau}{m}\right)
.
\end{equation}
Letting $\tau=\frac{2\pi i }{\varepsilon}$, we find that $e(-1/\tau)=e^{-\varepsilon}$ (where $e(x):=e^{2\pi i x}$), so that by \eqref{E2StarTrans}, and where for a general function $f$ of $\tau$ we denote by $f|_{q=e(x)}$ the value $f(x)$, we find that
\[
\mathcal E_m\big|_{q=e^{-\varepsilon}}=-\frac{4\pi^2}{m^2\varepsilon^2}E_2^*\big|_{q=e^{-\frac{4\pi^2}{\varepsilon m}}}
.
\]
Hence for any $m\in\N$ we have
\begin{equation}\label{G2Trans}
\begin{split}
G_2^0(q^m)\big|_{q=e^{-\varepsilon}}
&
=
\left(\frac1{24}-\frac{\mathcal E_m(\tau)}{24}+\frac{1}{8\pi y m}\right)\Big|_{q=e^{-\varepsilon}}
\\
&
=
\frac1{24}+\frac{\pi^2}{6m^2\varepsilon^2}\left(1-24\sum_{n\geq1}\sigma_1(n)e^{\frac{-4\pi^2 n}{\varepsilon m}}\right)+\frac{1}{4m\varepsilon}
\end{split}
\end{equation}
After proving Theorem \ref{mainthm} and Corollary \ref{DefectCorGenFn}, we will use \eqref{PartitionGenFunctionAsymp} and \eqref{G2Trans} to prove Corollary \ref{CorAsymp}.

\section{Proof of Theorem \ref{mainthm} and its corollaries}\label{Proofs}
We are now ready to prove our main result.
\begin{proof}[Proof of Theorem \ref{mainthm}]
The proof follows from ideas which have their origins in the classical theory of $t$-cores, (see e.g. pg. 13 of \cite{MacDonald})). We first note that by repeated application of 
Theorem \ref{TQuotientTheorem}, for any $j\geq0$, $t\geq2$, we have
\begin{equation}
\label{GenTQuotFormula}
|\lambda|=|\beta_0|+t|\beta_1|+\ldots+t^j|\beta_j|+t^{j+1}|\alpha_{j+1}|
.
\end{equation}
Moreover, suppose we are given sets of partitions $B_0,\ldots,B_j$ with $t^k$ partitions in each $B_k$. Then given another set of $t^{j+1}$ partitions, say $A_{j+1}$, there is a unique partition $\lambda$ with $\beta_j=B_j$ for $j=0,\ldots,k$ and with $\alpha_{j+1}=A_{j+1}$. This partition can be determined by ``snaking'' through the $t$-core pre-tower and the $t$-core tower. That is, by Theorem \ref{TQuotientTheorem}, the rows $\alpha_{j+1}$ and $\beta_j$ uniquely determine $\alpha_j$, which together with $\beta_{j-1}$ determines $\alpha_{j-1}$. Continuing this process until we have uniquely determined $\alpha_0$ gives us the desired $\lambda$. Moreover, by \eqref{GenTQuotFormula}, we see that 
\[
\lambda
\vdash 
\left(|B_0|+t|B_1|+\ldots+t^j|B_j|+t^{j+1}|A_{j+1}|\right)
.
\]
Now for a general $m\in\N$, the generating function 
 \[
 \frac1{(q)_{\infty}^m}=:\sum_{n\geq0}b_{m}(n)q^n
 \]
 counts the number of partitions into $m$ colors, or equivalently, $b_{m}(n)$ is the number of $m$-tuples of partitions with total size $n$. We then say for two partitions $\lambda$ and $\mu$ that 
 \[
 \lambda\sim_j\mu
 \]
 if and only if
 \[
 \beta_k(t;\lambda)
 =
 \beta_k(t;\mu)
 \]
 for $k=0,1,\ldots,j$. 
 Hence, it is apparent by the discussion following \eqref{GenTQuotFormula} that for any $\lambda\in\mathcal P$, we have
\begin{equation}\label{FirstEqnProof11}
\sum
_
{
\substack{
\mu\in\mathcal P\\ \mu\sim_j\lambda
}
}
q^{|\mu|}
=
\frac{\prod_{k=0}^jq^{t^k|\beta_k(t;\lambda)|}}
{\left(q^{t^{j+1}};q^{t^{j+1}}\right)_{\infty}^{t^j+1}}
.
\end{equation}
Multiplying \eqref{FirstEqnProof11} by $\left(q^{t^{j+1}};q^{t^{j+1}}\right)_{\infty}^{t^j+1}$ and summing over all generalized $(j,t)$-cores, we find that 
\begin{equation}\label{gencoregenfunction}
\sum_{\substack{\lambda\in\mathcal P\\ |\alpha_{j+1}(t;\lambda)|=0}}\prod_{k=0}^jq^{t^k|\beta_k(t;\lambda)|}
=
\sum_{n\geq0}c_{j,t}(n)q^n
=
\frac{\left(q^{t^{j+1}};q^{t^{j+1}}\right)_{\infty}^{t^j+1}}
{(q)_{\infty}}
.
\end{equation}
Here, we have used \eqref{GenTQuotFormula} to deduce that the size of a generalized $(j,t)$-core is $\sum_{k=0}^jt^k|\beta_k|$, together with the obvious fact that each partition is $\sim_j$ to a unique generalized $(j,t)$-core.
Hence, once again using \eqref{GenTQuotFormula} and the discussion following that equation, together with \eqref{gencoregenfunction}, we find the following weighted version of \eqref{gencoregenfunction}: 
\begin{equation}\label{PenultEqn}
\begin{aligned}
\sum_{\lambda\in\mathcal P}\sum_{k=0}^jt^k|\beta_k(t;\lambda)|q^{|\lambda|}
&
=
\frac{\displaystyle\sum_{\substack{\lambda\in\mathcal P\\ |\alpha_{j+1}(t;\lambda)|=0}}q\frac d{dq}\left(\prod_{k=0}^jq^{t^k|\beta_k|}\right)}{\left(q^{t^{j+1}};q^{t^{j+1}}\right)_{\infty}^{t^{j+1}}}
\\
&
=
\left(\frac{1}{\left(q^{t^{j+1}};q^{t^{j+1}}\right)_{\infty}^{t^j+1}}\right)
q\frac{d}{dq}
\left(\frac{\left(q^{t^{j+1}};q^{t^{j+1}}\right)_{\infty}^{t^j+1}}
{(q)_{\infty}}\right)
.
\end{aligned}
\end{equation}
Now it is a straightforward calculation (and a well-known fact) that the logarithmic derivative of $(q)_{\infty}$ is
\begin{equation}\label{G2OverEta}
\frac{q\frac d{dq}\left((q)_{\infty}\right)}{(q)_{\infty}}
=-G_2^0(q)
.
\end{equation}
Combining with \eqref{PenultEqn}, and plugging in $j=0$, we have shown that 
\[
T_{0,t}(q)=\frac{G_2^0(q)-t^2G_2^0(q^t)}{(q)_{\infty}}
.
\]
We now proceed by induction on $j$.
Assume that Theorem \ref{mainthm} is true for $j-1$. Then we find by \eqref{PenultEqn} and \eqref{G2OverEta} that
\begin{equation}\label{FirstNeed}
\frac{G_2^0(q)-t^{2j+2}G_2^0\left(q^{t^{j+1}}\right)}{(q)_{\infty}}
=
\sum_{k=0}^j\sum_{\lambda\in\mathcal P}t^k|\beta_k(t;\lambda)|q^{|\lambda|}=\sum_{k=0}^jt^kT_{k,t}(q)
,
\end{equation}
which by the induction hypothesis is equal to 
\[
t^jT_{j,t}(q)+\frac{1}{(q)_{\infty}}\sum_{k=0}^{j-1}\left(t^{2k}G_2^0\left(q^{t^k}\right)-t^{2k+2}G_2^0\left(q^{t^{k+1}}\right)\right)
,
\]
which telescopes to 
\begin{equation}\label{SecondNeed}
t^jT_{j,t}(q)+\frac{1}{(q)_{\infty}}\left(G_2^0(q)-t^{2j}G_2^0\left(q^{t^{j}}\right)\right)
.
\end{equation}
Combining \eqref{FirstNeed} and \eqref{SecondNeed} directly yields the claim of Theorem \ref{mainthm}.
\end{proof}

The proof of Corollary \ref{DefectCorGenFn} is now immediate.
\begin{proof}[Proof of Corollary \ref{DefectCorGenFn}]
First note that \eqref{PartitionGenFn} and \eqref{G2OverEta} together imply that
\begin{equation}\label{GenFunctionG2EtanPn}
\frac{G_2^0(q)}{(q)_{\infty}}=\sum_{n\geq1}np(n)q^n
.
\end{equation}

By Theorem \ref{mainthm} and \eqref{DefectDefn}, we find that 
\begin{equation*}
\begin{aligned}
D_t(q)
&
=
\sum_{\lambda\in\mathcal P}q^{|\lambda|}\left(\frac{|\lambda|-\sum_{j\geq0}|\beta_j|}{t-1}\right)
\\
&
=
\frac{1}{(t-1)(q)_{\infty}}
\left[
G_2^0(q)
+
\sum_{j\geq0}
\left(
t^{j+2}G_2^0\left(q^{t^{j+1}}\right)
-
t^jG_2^0\left(q^{t^j}\right)
\right)
\right]
,
\end{aligned}
\end{equation*}
which telescopes to form the claimed series in Corollary \ref{DefectCorGenFn}.

\end{proof}

We now apply the discussion of Sections \ref{TauberianPrelim} and \ref{EisensteinPrelim} to Corollary \ref{DefectCorGenFn} in order to prove Corollary \ref{CorAsymp}.
\begin{proof}[Proof of Corollary \ref{CorAsymp}]
It is clear that $D_t(q)$ is convergent for $|q|<1$. We now establish that the coefficients of $D_t(n)$ are monotonically increasing, noting in particular that their positivity is clear from their combinatorial interpretation. To see this, note that it is enough to show that for any $m$, the coefficients of 
\[
F_m(q):=\frac{G_2^0(q^m)}{(q)_{\infty}}
\]
are monotonic and positive. We show this by using \eqref{GenFunctionG2EtanPn} that
\begin{equation}
\label{FMConv}
F_m(q)=\left(\sum_{n\geq1}np(n)q^{mn}\right)\left(\frac{(q^m;q^m)_{\infty}}{(q)_{\infty}}\right)
.
\end{equation}
Now if we have two sequences $\{a_n\}_{n\geq0}$ and $\{b_n\}_{n\geq0}$ which are both nonnegative with the coefficients $b_n$ monotonically increasing, then the coefficients $c_n$ of the Cauchy products are also positive and monotonically increasing, as 
\[
c_{n+1}=a_0b_{n+1}+a_1b_n+\ldots+a_{n+1}b_0>a_0b_n+a_1b_{n-1}+\ldots+a_nb_0=c_n
.
\]
Now the sequence $\{np(n)\}_{n\geq0}$ is obviously non-negative and monotonically increasing, and the coefficients of $(q^m;q^m)_{\infty}/(q)_{\infty}$ are non-negative since they count $m$-regular partitions, which follows from the well-known identity:
\begin{equation}\label{RegPartsGenFunction}
\frac{(q^m;q^m)_{\infty}}{(q)_{\infty}}=\sum_{n\geq0}p_t(n)q^n
.
\end{equation}

Thus, we see that the coefficients of $D_t(q)$ are positive and monotonically increasing, and hence the conditions of Theorem \ref{InghamTheorem} apply. We now need to determine the asymptotic main term of $D_t(e^{-\varepsilon})$.
Using  \eqref{G2Trans}, we find that
\begin{equation*}
\begin{aligned}
\left[\sum_{j\geq1}t^jG_2^0\left(q^{t^j}\right)\right]\Bigg|_{q=e^{-\varepsilon}}
&
=
\sum_{j\geq1}t^j
\left(
\frac1{24}+\frac{\pi^2}{6\varepsilon^2t^{2j}}\left(1-24\sum_{n\geq1}\sigma_1(n)e^{\frac{-4\pi^2 n}{t^j \varepsilon}}\right)+\frac{1}{4t^j\varepsilon}
\right)
,
\end{aligned}
\end{equation*}
so as $\varepsilon\rightarrow0^+$, we easily see that
\begin{equation}\label{OverallAsymp}
\left[\sum_{j\geq1}t^jG_2^0\left(q^{t^j}\right)\right]\Bigg|_{q=e^{-\varepsilon}}
\sim\frac{\pi^2}{6\varepsilon^2}
\sum_{j\geq1}t^{-j}
=
\frac{\pi^2}{6(t-1)\varepsilon^2}
.
\end{equation}

By combining \eqref{OverallAsymp} with \eqref{PartitionGenFunctionAsymp} and Corollary \ref{DefectCorGenFn},
we find that
\[
D_t(e^{-\varepsilon})
\sim
\frac{\pi^{\frac32}}{6\sqrt{2}(t-1)\varepsilon^{\frac32}}e^{\frac{\pi^2}{6\varepsilon}}
.
\]
Hence, by Theorem \ref{InghamTheorem}, if we set 
\[
d_t(n):=\sum_{\lambda\vdash n}d_t(\lambda)
,
\] 
we find that 
\[
d_t(n)
\sim
\frac{\sqrt{3}}{12(t - 1)}e^{\pi\sqrt{\frac{2n}3}}
,
\]
and in particular that 
\[
d_t(n)\sim
\frac{np(n)}
{t-1}
,
\]
from which the claimed result on the average values of $d_t(\lambda)$ for partitions $\lambda\vdash n$ follows.

\end{proof}

We have also essentially already shown Corollary \ref{GenTCorePartitionsCor}.
\begin{proof}[Proof of Corollary \ref{GenTCorePartitionsCor}]
The claim of Corollary \ref{GenTCorePartitionsCor} is contained in \eqref{gencoregenfunction}.
\end{proof}

The proof of Corollary \ref{CongCor} is now also straightforward.
\begin{proof}[Proof of Corollary \ref{CongCor}]
The congruence follows by plugging in $j=0$ into Theorem \ref{mainthm} and using \eqref{GenFunctionG2EtanPn}.
\end{proof}

Finally, Corollary \ref{RegularPartCor} also follows directly.
\begin{proof}[Proof of Corollary \ref{RegularPartCor}]
To see this, consider the statement of Theorem \ref{mainthm} for $j=0$ and apply \eqref{GenFunctionG2EtanPn} and  \eqref{FMConv}.
\end{proof}


\end{document}